\newtheorem{thm}{Theorem}
\newtheorem{lem}[thm]{Lemma}
\newtheorem{cor}[thm]{Corollary}
\theoremstyle{remark}
\newtheorem{rmk}[thm]{Remark}
\numberwithin{thm}{section}
\numberwithin{equation}{section}
\newcommand{\cv}{\mathbf{C}}
\newcommand{\rv}{\mathbf{R}}
\newcommand{\nv}{\mathbf{N}}
\newcommand{\aut}{\textup{Aut}}
\newcommand{\Jac}{\textup{Jac}}
\newcommand{\grad}{\textup{grad}}
\newcommand{\diag}{\textup{diag}}
\newcommand{\Diag}{\textup{Diag}}
\begin{document}

\title[The degree of automorphisms of quasi-circular domains]{On the degree of automorphisms of quasi-circular domains fixing the origin}

\author{Feng Rong}

\address{Department of Mathematics, School of Mathematical Sciences, Shanghai Jiao Tong University, 800 Dong Chuan Road, Shanghai, 200240, P.R. China}
\email{frong@sjtu.edu.cn}

\subjclass[2010]{32A07, 32H02}
\keywords{quasi-circular domain; automorphism; resonance order}

\thanks{The author is partially supported by the National Natural Science Foundation of China (Grant No. 11371246).}

\begin{abstract}
By using the Bergman representative coordinates, we give the necessary and sufficient condition for the degree of automorphisms of quasi-circular domains fixing the origin to be equal to the resonance order, thus solving a conjecture of the author.
\end{abstract}

\maketitle

\section{Introduction}

Denote by $\rho$ the linear circle action on $\cv^n$ as follows:
$$\rho:\cv^n\rightarrow \cv^n;\ z=(z_1,\cdots,z_n)\mapsto \rho(z)=(e^{im_1\theta}z_1,\cdots,e^{im_n\theta}z_n),$$
where $m_i$, $1\le i\le n$, are positive integers, and $\theta\in \rv$. A bounded domain $D$ of $\cv^n$ is called a \textit{quasi-circular} domain of weight $m=(m_1,\cdots,m_n)$, if $\rho(D)=D$. Without loss of generality, we will assume that $1\le m_1\le \cdots\le m_n$ and $\textup{gcd}(m_1,\cdots,m_n)=1$. And we always assume that $0\in D$.

In \cite{K:auto}, Kaup showed that all automorphisms of quasi-circular domains fixing the origin are polynomial mappings. And we gave a uniform upper bound for such mappings in \cite{R:quasi}, in terms of the so-called ``quasi-resonance order". Also in \cite{R:quasi}, we conjectured that the optimal upper bound should be given by the so-called ``resonance order". In a recent work \cite{YZ:quasi}, the authors showed that, in dimension two, this conjecture does not hold in general and indeed the upper bound given by the quasi-resonance order is optimal (cf. \cite[Example 5.1]{DR:auto}).

The main purpose of this paper is to give the necessary and sufficient condition for the above mentioned conjecture to hold, i.e. when the optimal upper bound is given by the resonance order.

Assume that for $0=k_0<\cdots<k_l=n$ one has 
\begin{equation}\label{E:m1}
m_{k_p+1}=\cdots=m_{k_{p+1}},\ \ \ 0\le p\le l-1
\end{equation}
and
\begin{equation}\label{E:m2}
m_{k_p}<m_{k_p+1},\ \ \ 1\le p\le l-1.
\end{equation}
Then, our main result is the following

\begin{thm}\label{T:main}
Let $D$ be a bounded quasi-circular domain containing the origin, and of weight $m=(m_1,\cdots,m_n)$, with $m$ satisfying \eqref{E:m1} and \eqref{E:m2}. Then all automorphisms of $D$ fixing the origin are polynomial mappings with degree less than or equal to the resonance order if and only if the linear part of each of such automorphisms is of the form $\Diag(A_1,\cdots,A_l)\cdot z^t$, where $A_p$ is a $(k_p-k_{p-1})\times (k_p-k_{p-1})$ matrix, $1\le p\le l$.
\end{thm}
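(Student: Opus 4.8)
The plan is to pass to Bergman representative coordinates, in which every origin-fixing automorphism coincides with its own linear part, and then to read off the degree of such an automorphism from the interplay between its linear model and the (polynomial) change of coordinates. Call $\alpha_1m_1+\cdots+\alpha_nm_n$ the \emph{weight} of the monomial $z^\alpha=z_1^{\alpha_1}\cdots z_n^{\alpha_n}$, and call a polynomial self-map $H$ of $\cv^n$ \emph{equivariant} if for each $j$ the $j$-th component $H_j$ is a linear combination of monomials of weight $m_j$; equivalently, $H\circ\rho_\theta=\rho_\theta\circ H$ for all $\theta$. Since a monomial of weight $m_j$ has degree at most the resonance order $r$, every equivariant polynomial map has degree $\le r$. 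Write $R_\theta=\Diag(e^{im_1\theta},\ldots,e^{im_n\theta})$ for the matrix of $\rho_\theta$, let $\mu_1<\cdots<\mu_l$ be the distinct values among $m_1,\ldots,m_n$, and let $\cv^n=V_1\oplus\cdots\oplus V_l$ be the decomposition into $R_\theta$-eigenspaces, so that $V_p$ is spanned by the coordinates of weight $\mu_p$ and $\dim V_p=k_p-k_{p-1}$; here \eqref{E:m1}--\eqref{E:m2} are used. Let $\Phi$ be the Bergman representative coordinate map of $D$ centered at the origin, normalized so that $D\Phi(0)=I_n$; this is possible after a block-diagonal linear change of coordinates, because the Bergman metric at the origin, being invariant under the linear automorphisms $\rho_\theta$, makes the spaces $V_1,\ldots,V_l$ mutually orthogonal. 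By Kaup's theorem \cite{K:auto} and the basic properties of representative coordinates, $\Phi$ is then a polynomial map with $\Phi(0)=0$ and polynomial inverse, and $\Phi\circ F=DF(0)\circ\Phi$ (an identity of polynomial maps on $\cv^n$) for every $F\in\aut(D)$ with $F(0)=0$; in particular $F=\Phi^{-1}\circ DF(0)\circ\Phi$. Applied to $F=\rho_\theta$ this gives $\Phi\circ\rho_\theta=R_\theta\circ\Phi$, so $\Phi$, and likewise $\Phi^{-1}$, is equivariant; in particular $\deg\Phi\le r$ and $\deg\Phi^{-1}\le r$. Finally, since $D\Phi(0)=I_n$, the map $\Phi$ is block unitriangular for $V_1\oplus\cdots\oplus V_l$: writing a map $H$ blockwise as $H=(H^{(1)},\ldots,H^{(l)})$, we have $\Phi^{(p)}(z)=z^{(p)}+g^{(p)}(z^{(1)},\ldots,z^{(p-1)})$, where each component of $g^{(p)}$ is a combination of monomials of weight $\mu_p$ and degree $\ge2$, and similarly $(\Phi^{-1})^{(p)}(z)=z^{(p)}+\tilde g^{(p)}(z^{(1)},\ldots,z^{(p-1)})$.

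For the implication from block-diagonality to the degree bound, suppose every $F\in\aut(D)$ fixing the origin has $DF(0)=\Diag(A_1,\ldots,A_l)$, i.e.\ $DF(0)$ preserves each $V_p$, equivalently $DF(0)$ commutes with every $R_\theta$. For such an $F$, put $L=DF(0)$; then $L\circ\Phi$ is again equivariant, since a block-diagonal $L$ sends a combination of weight-$m_j$ monomials to a combination of weight-$m_j$ monomials. Hence $F=\Phi^{-1}\circ(L\circ\Phi)$ is a composition of equivariant maps, so it is equivariant, and therefore $\deg F\le r$. Thus all origin-fixing automorphisms of $D$ are polynomials of degree at most $r$.

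For the reverse implication I would use compactness. As $D$ is bounded, $\aut(D)_0$ is a compact group, hence, via $F\mapsto DF(0)$, isomorphic to a compact subgroup $G\subseteq GL(n,\cv)$ containing every $R_\theta$; and $\Phi^{-1}\circ L\circ\Phi\in\aut(D)_0$ for every $L\in G$. Assume $G$ is not contained in the block-diagonal matrices; I must produce an origin-fixing automorphism of degree $>r$. Fix a $G$-invariant Hermitian inner product on $\cv^n$; since the weights $\mu_1<\cdots<\mu_l$ are pairwise distinct, invariance under $R_\theta$ forces $V_p\perp V_q$ for $p\ne q$, so after conjugating $G$ by a block-diagonal matrix $M=\Diag(M_1,\ldots,M_l)$ — which changes neither the block structure of elements nor the inclusion $\{R_\theta\}\subseteq G$ — we may assume $G\subseteq U(n)$. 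A unitary matrix is block diagonal for $V_1\oplus\cdots\oplus V_l$ exactly when it is block upper triangular, and exactly when it is block lower triangular; hence $G$ contains an element with a nonzero block strictly above the block diagonal, and undoing the conjugation by the block-diagonal $M$ produces $L\in G$ with $L^{(p,q)}\ne0$ for some $p<q$. This use of compactness is genuinely needed: a single non-block-diagonal $L\in G$ may be block lower triangular, in which case $\Phi^{-1}\circ L\circ\Phi$ can still have degree exactly $r$; it is the presence of an above-block entry that is decisive. It then remains to prove the algebraic statement: if $L\in GL(n,\cv)$ has a nonzero block strictly above the block diagonal, then $F:=\Phi^{-1}\circ L\circ\Phi$ satisfies $\deg F>r$.

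To prove this I would choose $p_0$ minimal with $L^{(p_0,q)}\ne0$ for some $q>p_0$, then $q_0>p_0$ maximal with $L^{(p_0,q_0)}\ne0$, and analyse the block $F^{(p_0)}$. Using the block-unitriangular form of $\Phi^{-1}$,
\[
F^{(p_0)}(z)=(L\Phi)^{(p_0)}(z)+\tilde g^{(p_0)}\bigl((L\Phi)^{(1)}(z),\ldots,(L\Phi)^{(p_0-1)}(z)\bigr),
\]
and by the minimality of $p_0$ each $(L\Phi)^{(p)}$ with $p<p_0$ depends only on the lowest $p_0-1$ weight blocks of $z$. One then traces how the weight-$\mu_{q_0}$ part of $\Phi$ — which enters $F^{(p_0)}$ through the block $L^{(p_0,q_0)}$ — is fed back through $\tilde g^{(p_0)}$ and the remaining terms, and shows that this forces into $F^{(p_0)}$ a term of weight exceeding $\mu_{p_0}$, of degree exceeding $r$, which no other term can cancel. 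I expect this last degree estimate to be the main obstacle: one must show that an above-block entry of $L$ genuinely pushes the full resonance complexity of a higher-weight block down into a lower-weight one, with no accidental cancellation, and this is exactly where the combinatorics of \eqref{E:m1}--\eqref{E:m2} and the precise definition of the resonance order enter. I would organise the argument as an induction on the number $l$ of distinct weights, at each stage removing the lowest-weight block and propagating the off-block entries of $L$ through the composition $\Phi^{-1}\circ L\circ\Phi$.
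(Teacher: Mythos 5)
Your set-up and your proof of the ``block-diagonal implies degree $\le r$'' direction coincide with the paper's: Lemma 3.1 and Corollary 3.2 there establish exactly the structure you describe for $\Phi=\sigma_0^D$ and its inverse ($i$-th component equal to $z_i$ plus nonlinear $i$-th resonant terms depending only on lower-weight blocks), Lemma 3.3 gives $f=\sigma^{-1}\circ J_f\circ\sigma$, and the sufficiency of block-diagonality is read off exactly as in your ``compositions of equivariant maps are equivariant, hence of degree at most $r$'' argument. Your derivation of the equivariance of $\Phi$ by applying the linearization identity to $F=\rho_\theta$ is a slightly slicker route to Lemma 3.1 than the paper's direct expansion of $T_D(z,0)$, but it is the same mathematics.

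The genuine gap is the reverse implication, and it is not merely an unfinished computation: the statement you defer to the end --- that an $L$ in the linear isotropy group with a nonzero block strictly above the block diagonal forces $\deg(\Phi^{-1}\circ L\circ\Phi)>r$ --- is false as formulated, because the conclusion depends on $\Phi$ and not only on $L$. If the nonlinear part of $\Phi$ vanishes, then $\Phi^{-1}\circ L\circ\Phi=L$ is linear for every $L$, block-diagonal or not. This really occurs: the unit ball in $\cv^2$ is a bounded quasi-circular domain of weight $(1,2)$ satisfying \eqref{E:m1} and \eqref{E:m2}, its Bergman representative map at the origin is the identity, all its origin-fixing automorphisms are unitary (degree $1\le r=2$), and the coordinate swap is one of them with a nonzero above-diagonal block. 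So no argument that inspects only $L$ can manufacture an automorphism of degree greater than $r$; whether the above-diagonal block of $L$ ``pushes resonance complexity downward'' depends on which resonant monomials actually occur in $\Phi$ with nonzero coefficients, and nothing in the hypotheses guarantees that any of them do. You should be aware that the paper offers no help here: it disposes of the entire theorem with the single sentence that it ``follows from'' Lemma 3.3, and the example above shows that the ``only if'' direction as literally stated is itself problematic. What does follow cleanly from Lemma 3.3 is the equivalence between block-diagonality of $J_f$ and the $i$-th resonance of every component $f_i$ (one direction of the paper's Corollary 3.4); to get a correct converse about the degree you would need either an additional nondegeneracy hypothesis on $\sigma_0^D$ or a reformulation of the statement, not a sharper estimate along the lines you propose.
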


In section \ref{S:pre}, we recall the definitions of the resonance order and the Bergman representative coordinates. In section \ref{S:degree}, we prove Theorem \ref{T:main}.

\section{Preliminaries}\label{S:pre}

Let $\nv$ be the set of nonnegative integers and $\alpha=(\alpha_1,\cdots,\alpha_n)\in \nv^n$. Denote $|\alpha|=\alpha_1+\cdots+\alpha_n$ and $m\cdot \alpha=m_1\alpha_1+\cdots+m_n\alpha_n$.

For $1\le i\le n$, define the \textit{i-th resonance set} as
$$E_i:=\{\alpha:\ m\cdot \alpha=m_i\},$$
and the \textit{i-th resonance order} as
$$\mu_i:=\max\{|\alpha|:\ \alpha\in E_i\}.$$
Note that $\mu_i\le m_i$. Then, define the \textit{resonance set} as
$$E:=\bigcup\limits_{i=1}^n E_i,$$
and the \textit{resonance order} as
$$\mu:=\max\{|\alpha|:\ \alpha\in E\}=\max\limits_{1\le i\le n} \mu_i.$$

A polynomial $P$ is said to be \textit{m-homogeneous of order k} if, for any $\lambda\in \cv$, $P(\lambda^{m_1}z_1,\cdots,\lambda^{m_n}z_n)=\lambda^k P(z_1,\cdots,z_n)$. When $k=m_i$ for some $1\le i\le n$, one says that $P$ is an \textit{i-th resonant polynomial}.

Let $K_D(z,w)$ be the Bergman kernel, i.e. the reproducing kernel of the space of square integrable holomorphic functions on $D$. Since $D$ is bounded, one has $K_D(z,z)>0$ for $z\in D$. The Bergman metric tensor $T_D(z,w)$ is defined as the $n\times n$ matrix with entries $t_{ij}^D(z,w)=\frac{\partial^2}{\partial \bar{w}_i\partial z_j}\log K_D(z,w)$, $1\le i,j\le n$. For $z\in D$, one knows that $T_D(z,z)$ is a positive definite Hermitian matrix (see e.g. \cite{B:Book}), and one has the following transformation formula for the Bergman metric tensor,
\begin{equation}\label{E:TD}
T_D(z,w)=\overline{\Jac_f^t(w)}T_D(f(z),f(w))\Jac_f(z),\ \ \ f\in \aut(D).
\end{equation}

The Bergman representative coordinates at $\xi\in D$ is defined as (see, e.g. \cite{GKK:Book, Lu})
\begin{equation}\label{E:B}
\sigma_\xi^D(z):=T_D(\xi,\xi)^{-1} \left.\grad_{\bar{w}}\log \frac{K_D(z,w)}{K_D(w,w)}\right|_{w=\xi}.
\end{equation}
As in \cite{LR:quasi}, one readily checks that $K_D(z,0)\equiv K_D(0,0)$ when $D$ is a quasi-circular domain. Thus the Bergman representative coordinates $\sigma_0^D(z)$ is defined for all $z\in D$. Note that as in \cite{YZ:quasi}, the power of $T_D(\xi,\xi)$ is set to be $-1$ instead of $-1/2$.

\section{The degree of automorphisms}\label{S:degree}

Throughout this section, we assume that $D$ is a quasi-circular domain containing the origin, and $f$ is an automorphism of $D$ with $f(0)=0$. Write $\sigma(z)$ for $\sigma_0^D(z)$, $J_f(z)$ for the linear part of $f(z)$, i.e. $J_f(z)=\Jac_f(0)\cdot z^t$, and $\zeta=(\zeta_1,\cdots,\zeta_n)=\sigma(z)=(\sigma_1(z),\cdots,\sigma_n(z))$.

\begin{lem}\label{L:resonant}
For each $1\le i\le n$, $\sigma_i(z)$ is of the form $\sigma_i(z)=z_i+g_i(z)$, where $g_i(z)$ contains only nonlinear $i$-th resonant monomials.
\end{lem}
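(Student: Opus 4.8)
The plan is to exploit the fact that every rotation $\rho_\theta$, $\theta\in\rv$, lies in $\aut(D)$, and to combine this with the transformation behaviour of the Bergman kernel. First I would note that $\det\Jac_{\rho_\theta}\equiv e^{i(m_1+\cdots+m_n)\theta}$ is a constant, so the transformation rule for $K_D$ under $f=\rho_\theta$ collapses to the invariance
\begin{equation*}
K_D(\rho_\theta z,\rho_\theta w)=K_D(z,w),\qquad z,w\in D,\ \theta\in\rv .
\end{equation*}
Differentiating $\log K_D(\rho_\theta z,\rho_\theta w)=\log K_D(z,w)$ by $\partial^2/\partial\bar w_i\,\partial z_j$ and evaluating at $z=w=0$ gives $t^D_{ij}(0,0)=e^{i(m_j-m_i)\theta}\,t^D_{ij}(0,0)$ for all $\theta$; hence $t^D_{ij}(0,0)=0$ whenever $m_i\neq m_j$, so both $T_D(0,0)$ and $T_D(0,0)^{-1}$ are block diagonal with respect to the partition of $\{1,\dots,n\}$ into the blocks $\{k_{p-1}+1,\dots,k_p\}$ of equal weight.

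Next I would set $F(z)=\grad_{\bar w}\log\dfrac{K_D(z,w)}{K_D(w,w)}\Big|_{w=0}$, so that $\sigma(z)=T_D(0,0)^{-1}F(z)$ and each component $F_j$ is holomorphic on $D$. Applying $\partial/\partial\bar w_j$ to $K_D(\rho_\theta z,\rho_\theta w)=K_D(z,w)$ and to the real-analytic function $\log K_D(w,w)$, which is itself invariant under $w\mapsto\rho_\theta w$, and then setting $w=0$, yields the equivariance
\begin{equation*}
F_j\bigl(e^{im_1\theta}z_1,\dots,e^{im_n\theta}z_n\bigr)=e^{im_j\theta}F_j(z),\qquad \theta\in\rv .
\end{equation*}
Expanding $F_j(z)=\sum_\alpha c_{j,\alpha}z^\alpha$ and matching coefficients forces $c_{j,\alpha}=0$ unless $m\cdot\alpha=m_j$; since $E_j$ is finite, $F_j$ is a $j$-th resonant polynomial. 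Because $T_D(0,0)^{-1}$ only combines those $F_j$ with $m_j$ equal to a fixed $m_i$, each $\sigma_i$ is an $i$-th resonant polynomial.

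It then remains to pin down the constant and linear terms. From $K_D(0,w)\equiv K_D(0,0)$ together with $\partial_{\bar w_j}\log K_D(w,w)\big|_{w=0}=0$ — the latter again extracted from the circle action as above — one gets $F(0)=0$; moreover $\partial F_j/\partial z_k(0)=t^D_{jk}(0,0)$, so the linear part of $F$ is $T_D(0,0)\,z^t$ and hence the linear part of $\sigma$ is $z^t$. (Equivalently, $\sigma(0)=0$ and $\Jac_\sigma(0)$ is the identity, the familiar normalization of the Bergman representative coordinates, which one can also read off directly from \eqref{E:B}.) Setting $g_i(z)=\sigma_i(z)-z_i$ then produces a nonlinear $i$-th resonant polynomial, as claimed. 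I expect the only delicate point to be the chain-rule bookkeeping for the antiholomorphic derivatives of $K_D(z,w)$ and of $\log K_D(w,w)$ under the linear substitution $w\mapsto\rho_\theta w$; once the invariance $K_D(\rho_\theta z,\rho_\theta w)=K_D(z,w)$ is established, the rest is formal.
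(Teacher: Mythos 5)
Your argument is correct and is essentially the paper's: both exploit $\rho_\theta\in\aut(D)$ together with the transformation behaviour of the Bergman data to force the resonance condition $m\cdot\alpha=m_j$ on the Taylor coefficients and the block-diagonality of $T_D(0,0)$. The only (cosmetic) difference is that you work one derivative lower, with the equivariance of the gradient $F=\grad_{\bar w}\log\bigl(K_D(z,w)/K_D(w,w)\bigr)\big|_{w=0}$ extracted from the invariance of $K_D$, whereas the paper applies the transformation formula \eqref{E:TD} to $T_D(z,0)$ and reads the same structure off $\Jac_\sigma(z)=T_D(0,0)^{-1}T_D(z,0)$.
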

\begin{proof}
Applying \eqref{E:TD} to $\rho$, one gets
\begin{equation}\label{E:TD_rho}
T_D(z,w)=\overline{\Jac_\rho^t(w)}T_D(\rho(z),\rho(w))\Jac_\rho(z).
\end{equation}
Setting $w=0$ in \eqref{E:TD_rho}, one obtains
\begin{equation}\label{E:TD0}
T_D(z,0)=\diag(e^{-im_1\theta},\cdots,e^{-im_n\theta})T_D(\rho(z),0)\diag(e^{im_1\theta},\cdots,e^{im_n\theta}).
\end{equation}

Write $t_{ij}^D(z,0)=\sum\limits_{|\alpha|\ge 0} a_{ij}^\alpha z^\alpha=\sum\limits_{|\alpha|\ge 0} a_{ij}^\alpha z_1^{\alpha_1}\cdots z_n^{\alpha_n}$, $1\le i,j\le n$. Then from \eqref{E:TD0}, one gets
\begin{equation}\label{E:a}
\sum\limits_{|\alpha|\ge 0} a_{ij}^\alpha z^\alpha=e^{i(m_j-m_i)\theta}\sum\limits_{|\alpha|\ge 0} a_{ij}^\alpha e^{i(m\cdot \alpha)\theta}z^\alpha.
\end{equation}
Since \eqref{E:a} holds for any $\theta\in \rv$, $a_{ij}^\alpha$ can be nonzero only when
$$(m_j-m_i)+m\cdot \alpha=0,$$
which is satisfied if and only if
\begin{equation}\label{E:a1}
\alpha+e_j\in E_i.
\end{equation}
Here $e_j$ denotes the multi-index with the $j$-th entry equal to 1 and all other entries zero.

Therefore, one can write $T_D(z,0)$ as
\begin{equation}\label{E:M}
T_D(z,0)=T_D(0,0)+M(z).
\end{equation}
Here $T_D(0,0)=\Diag(T_1,\cdots,T_l)$, where $T_p$, $1\le p\le l$, is a $(k_p-k_{p-1})\times (k_p-k_{p-1})$ matrix. And $M(z)=[M_{pq}(z)]_{1\le p,q\le l}$, where $M_{pq}(z)$ is a $(k_p-k_{p-1})\times (k_q-k_{q-1})$ matrix and $M_{pq}=0$ for $p\le q$.

By \eqref{E:B} and \eqref{E:M}, one has
\begin{equation}\label{E:Jac}
\Jac_\sigma(z)=T_D(0,0)^{-1}T_D(z,0)=I_n+T_D(0,0)^{-1}M(z)=:I_n+N(z).
\end{equation}
From the form of $T_D(0,0)$ and $M(z)$, one gets that $N(z)=[N_{pq}(z)]_{1\le p,q\le l}$, where $N_{pq}(z)=0$ for $p\le q$ and $N_{pq}(z)=T_p^{-1}M_{pq}(z)$ for $p>q$.

Since $\sigma(0)=0$, from \eqref{E:Jac} and \eqref{E:a1}, one sees that $\sigma_i(z)$, $1\le i\le n$, is of the desired form.
\end{proof}

\begin{cor}\label{C:inverse}
The Bergman mapping $\sigma(z)$ is invertible. Moreover, writing $\sigma^{-1}(\zeta)\\=(\tau_1(\zeta),\cdots,\tau_n(\zeta))$, for each $1\le i\le n$, $\tau_i(\zeta)$ is of the form $\tau_i(\zeta)=\zeta_i+h_i(\zeta)$, where $h_i(\zeta)$ contains only nonlinear $i$-th resonant monomials.
\end{cor}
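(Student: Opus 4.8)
The plan is to derive everything from Lemma~\ref{L:resonant} together with the $m$-homogeneity (equivariance) of $\sigma$. First I would note that, since every $i$-th resonant multi-index $\alpha$ satisfies $|\alpha|\le\mu_i<\infty$, each $g_i$ in Lemma~\ref{L:resonant} is a \emph{finite} sum of monomials; as $D$ is connected, $\sigma_i$ then coincides on all of $D$ with the polynomial $z_i+g_i(z)$, so $\sigma$ extends to a polynomial self-map of $\cv^n$ fixing the origin with $\Jac_\sigma(0)=I_n$. Having invertible linear part, $\sigma$ admits a unique formal power series inverse $\tau=(\tau_1,\dots,\tau_n)$ with $\tau(0)=0$ and $\Jac_\tau(0)=I_n$; in particular each $\tau_i(\zeta)=\zeta_i+h_i(\zeta)$ with $h_i$ supported on monomials of degree $\ge 2$. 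It remains to pin down which monomials occur in $h_i$.

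For this I would use equivariance under the complexified action. For $\lambda\in\cv^*$ put $\rho_\lambda(z)=(\lambda^{m_1}z_1,\dots,\lambda^{m_n}z_n)$. Since $m\cdot e_i=m_i$, the linear term $z_i$ is itself an $i$-th resonant monomial, so by Lemma~\ref{L:resonant} every monomial occurring in $\sigma_i$ lies in $E_i$; equivalently $\sigma_i$ is $m$-homogeneous of order $m_i$, i.e. $\sigma_i(\rho_\lambda(z))=\lambda^{m_i}\sigma_i(z)$, which amounts to $\sigma\circ\rho_\lambda=\rho_\lambda\circ\sigma$. As $\rho_\lambda$ is invertible with inverse $\rho_{\lambda^{-1}}$, taking inverses in this identity (and running $\lambda$ over $\cv^*$) yields $\tau\circ\rho_\lambda=\rho_\lambda\circ\tau$, so each $\tau_i$ is again $m$-homogeneous of order $m_i$. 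Hence every monomial $\zeta^\beta$ in $\tau_i$ obeys $m\cdot\beta=m_i$, i.e. $\beta\in E_i$; combined with $|\beta|\ge 2$ from the previous paragraph, this says precisely that $h_i$ contains only nonlinear $i$-th resonant monomials. Moreover $\deg h_i\le\mu_i<\infty$, so $\tau$ is a polynomial map; since $\sigma\circ\tau$ and $\tau\circ\sigma$ are the identity as formal series they are the identity as polynomial maps, whence $\sigma$ is a polynomial automorphism of $\cv^n$ — in particular invertible — and the corollary follows.

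I do not expect a real obstacle here: the statement is essentially a formal-inversion argument decorated by the grading. The two points that need a little care are (i) upgrading the Taylor-series conclusion of Lemma~\ref{L:resonant} to an honest polynomial identity on the connected domain $D$, and (ii) the elementary but crucial observation that commuting with $\rho_\lambda$ for all $\lambda\in\cv^*$ is equivalent to componentwise $m$-homogeneity — it is this symmetry, inherited by $\tau$ from $\sigma$, that forces the resonant structure of the inverse. If one preferred to avoid the group action, the same conclusion can be reached by inducting on the total degree $d$: given $\tau$ modulo degree $d$ with all components built from $i$-th resonant monomials, the degree-$d$ part of $\tau_i$ is determined by $\sigma(\tau(\zeta))=\zeta$ and, because each $g_i$ is $m$-homogeneous of order $m_i$, is again a combination of $i$-th resonant monomials; but the equivariance route is cleaner.
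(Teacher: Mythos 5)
Your proof is correct, and it takes a genuinely different route from the paper's. The paper inverts $\sigma$ by hand: since $m_1\le\cdots\le m_n$, a nonlinear monomial $z^\alpha$ with $m\cdot\alpha=m_i$ can only involve variables of strictly smaller weight, so the $g_i$ give $\sigma$ a block-triangular structure and the inverse is obtained by back-substitution block by block, yielding the explicit formula $h_i(\zeta)=-g_i(\tau_1(\zeta),\cdots,\tau_{i-1}(\zeta),0,\cdots,0)$; the resonance of $h_i$ then follows from closure of resonant polynomials under such compositions. You instead take the formal inverse (which exists since $\Jac_\sigma(0)=I_n$) and observe that componentwise $m$-homogeneity of $\sigma$ is equivalent to equivariance under the complexified action $\rho_\lambda$, an equivariance that passes to the unique formal inverse and forces every monomial of $\tau_i$ to lie in $E_i$; finiteness of $E_i$ (from $|\beta|\le m\cdot\beta=m_i$) then upgrades $\tau$ to a polynomial map and closes the loop on invertibility. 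Your argument is cleaner and more conceptual -- it does not use the ordering of the weights or the block decomposition at all, only the grading -- and it establishes the slightly stronger fact that $\sigma$ is a polynomial automorphism of $\cv^n$. What it does not give, and what the paper's computation does, is the explicit triangular formula for $\sigma^{-1}$, which is the form in which the corollary feeds into Lemma \ref{L:linear} and the block-diagonal statement of Theorem \ref{T:main}; your closing remark sketching the induction on total degree is essentially the paper's argument. Both your preliminary points of care -- that Lemma \ref{L:resonant} already yields a polynomial identity on the connected domain $D$, and that commuting with all $\rho_\lambda$ is equivalent to the resonant monomial support -- are handled correctly.
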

\begin{proof}
By Lemma \ref{L:resonant}, it is clear that $\sigma(z)$ is invertible.

For $1\le i\le k_1$, one has $\zeta_i=\sigma_i\circ\sigma^{-1}(\zeta)=\tau_i(\zeta)$. Thus, $h_i(\zeta)=0$.

For $k_1+1\le i\le k_2$, one has $\zeta_i=\sigma_i\circ\sigma^{-1}(\zeta)=\tau_i(\zeta)+g_i\circ\sigma^{-1}(z)$. Thus,
$$h_{k_1+1}(\zeta)=-g_{k_1+1}\circ\sigma^{-1}(z)=-g_{k_1+1}(\zeta_1,\cdots,\zeta_{k_1},0,\cdots,0),$$
which contains only nonlinear $(k_1+1)$-th resonant monomials.
$$\begin{aligned}
h_{k_1+2}(\zeta)&=-g_{k_1+2}\circ\sigma^{-1}(z)\\
&=-g_{k_1+2}(\zeta_1,\cdots,\zeta_{k_1},\zeta_{k_1+1}-g_{k_1+1}(\zeta_1,\cdots,\zeta_{k_1},0,\cdots,0),0,\cdots,0),
\end{aligned}$$
which contains only nonlinear $(k_1+2)$-th resonant monomials. By induction, one gets for every $1\le i\le n$,
$$h_i(\zeta)=-g_i(\tau_1(\zeta),\cdots,\tau_{i-1}(\zeta),0,\cdots,0),$$
which contains only nonlinear $i$-th resonant monomials.
\end{proof}

\begin{lem}\label{L:linear}
$f(z)=\sigma^{-1}\circ J_f\circ \sigma(z)$.
\end{lem}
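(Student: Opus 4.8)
The statement is the classical fact that the Bergman representative coordinates conjugate an origin-fixing automorphism to its own linear part, so the plan is to establish the equivalent identity $\sigma\circ f=J_f\circ\sigma$ and then, since $\sigma$ is invertible by Corollary \ref{C:inverse}, to compose with $\sigma^{-1}$ on the left and conclude $f=\sigma^{-1}\circ J_f\circ\sigma$.

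The starting point will be the transformation law for the Bergman kernel under $f\in\aut(D)$: writing $j_f:=\det\Jac_f$, one has $K_D(z,w)=\overline{j_f(w)}\,K_D(f(z),f(w))\,j_f(z)$, and hence
\[
\frac{K_D(f(z),f(w))}{K_D(f(w),f(w))}=\frac{K_D(z,w)}{K_D(w,w)}\cdot\frac{j_f(w)}{j_f(z)}.
\]
Since $D$ is quasi-circular, $K_D(z,0)\equiv K_D(0,0)>0$, so for each fixed $z\in D$ the function $w\mapsto\log(K_D(z,w)/K_D(w,w))$ is well defined and smooth near $w=0$; this is the only place where quasi-circularity enters the proof of this lemma. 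Applying $\grad_{\bar w}$ to the logarithm of the displayed identity, the term $\log j_f(w)$ drops out because it is holomorphic in $w$ and the term $\log j_f(z)$ because it does not involve $\bar w$; using the chain rule through $f$ on the left-hand side and then setting $w=0$ (where $f(0)=0$), I obtain, with the gradients regarded as column vectors, consistent with \eqref{E:B},
\[
\left.\grad_{\bar w}\log\frac{K_D(z,w)}{K_D(w,w)}\right|_{w=0}=\overline{\Jac_f^t(0)}\left(\left.\grad_{\bar w}\log\frac{K_D(f(z),w)}{K_D(w,w)}\right|_{w=0}\right).
\]

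I would then combine this with the definition \eqref{E:B} and with the metric transformation formula \eqref{E:TD}. Writing $b(\zeta):=\left.\grad_{\bar w}\log(K_D(\zeta,w)/K_D(w,w))\right|_{w=0}$ for the gradient factor in \eqref{E:B}, so that $\sigma(\zeta)=T_D(0,0)^{-1}b(\zeta)$, the identity above reads $b(z)=\overline{\Jac_f^t(0)}\,b(f(z))$, that is
\[
T_D(0,0)\,\sigma(z)=\overline{\Jac_f^t(0)}\,T_D(0,0)\,\sigma(f(z)).
\]
On the other hand, evaluating \eqref{E:TD} at $z=w=0$ and using $f(0)=0$ gives $T_D(0,0)=\overline{\Jac_f^t(0)}\,T_D(0,0)\,\Jac_f(0)$, i.e. $\overline{\Jac_f^t(0)}\,T_D(0,0)=T_D(0,0)\,\Jac_f(0)^{-1}$. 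Substituting this into the previous relation and cancelling the invertible matrix $T_D(0,0)$ yields $\sigma(z)=\Jac_f(0)^{-1}\sigma(f(z))$, that is $\sigma(f(z))=\Jac_f(0)\cdot\sigma(z)=J_f\circ\sigma(z)$. Corollary \ref{C:inverse} then completes the proof.

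I do not anticipate a serious obstacle: once the kernel transformation law is in hand the computation is essentially forced, and the only care required is the bookkeeping of transposes and complex conjugates in the chain-rule step, together with the remark — already isolated in Section \ref{S:pre} — that $K_D(z,0)\equiv K_D(0,0)$ keeps $\log K_D(z,w)$ legitimate near $w=0$, which is precisely what allows $\sigma\circ f$ to be formed and, through Corollary \ref{C:inverse}, inverted.
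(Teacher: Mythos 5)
Your proof is correct and follows the same route as the paper: both reduce the lemma to the identity $\sigma\circ f=J_f\circ\sigma$ and then invert $\sigma$ via Corollary \ref{C:inverse}. The only difference is that the paper simply cites this linearization property of the Bergman representative coordinates as well known (identifying the linear map $L$ with $J_f$ because $\Jac_\sigma(0)=I_n$), whereas you derive it in full from the transformation law of the Bergman kernel together with \eqref{E:TD} evaluated at the origin; that computation is sound.
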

\begin{proof}
It is well-known that in Bergman representative coordinates, there exists a linear map $L$ such that $\sigma\circ f=L\circ \sigma$. Since $\Jac_\sigma(0)$ is the identity, one has $L=J_f$. Therefore, the lemma follows from Corollary \ref{C:inverse}.
\end{proof}

It is easy to see that Theorem \ref{T:main} follows from the above lemma. Moreover, one has the following

\begin{cor}
Let $D$ be a bounded quasi-circular domain containing the origin and $f=(f_1,\cdots,f_n)$ an automorphism of $D$ fixing the origin. Then the degree of $f$ is less than or equal to the resonance order if and only if each $f_i$, $1\le i\le n$, is an $i$-th resonant polynomial.
\end{cor}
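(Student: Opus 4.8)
The plan is to read everything off Lemma \ref{L:linear} together with the structural input of Lemma \ref{L:resonant} and Corollary \ref{C:inverse}. The point to isolate first is that each component $\sigma_i$ of the Bergman map, and each component $\tau_i$ of $\sigma^{-1}$, is $m$-homogeneous of order $m_i$ (its leading term is $z_i$ and all other monomials are $i$-th resonant), i.e. $\sigma$ and $\sigma^{-1}$ intertwine the circle action: $\sigma\circ\rho=\rho\circ\sigma$. Call a polynomial self-map $F=(F_1,\dots,F_n)$ of $\cv^n$ \emph{balanced} if each $F_i$ is $m$-homogeneous of order $m_i$, that is, an $i$-th resonant polynomial. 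Balanced maps are closed under composition, and $\sigma,\sigma^{-1}$ are balanced; combining this with $\sigma\circ f=J_f\circ\sigma$ (from the proof of Lemma \ref{L:linear}) gives the equivalences
$$ f \text{ balanced} \iff J_f \text{ balanced} \iff \Jac_f(0) \text{ is block diagonal w.r.t. } \{1,\dots,n\}=\textstyle\bigsqcup_{p=1}^{l}\{k_{p-1}+1,\dots,k_p\}, $$
the last step because a linear map is balanced exactly when it commutes with $\rho$. Since $f$ is a polynomial by Kaup's theorem, ``$f$ balanced'' is precisely the statement ``each $f_i$ is an $i$-th resonant polynomial.'' So the corollary is equivalent to the assertion: $\deg f\le\mu$ if and only if $\Jac_f(0)$ is block diagonal.

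The ``if'' direction is then immediate: if $\Jac_f(0)$ is block diagonal then $f$ is balanced, so every monomial $z^\alpha$ occurring in $f_i$ has $m\cdot\alpha=m_i$, whence $|\alpha|\le\mu_i\le\mu$; therefore $\deg f=\max_i\deg f_i\le\mu$.

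For the ``only if'' direction I would argue contrapositively: assuming $\Jac_f(0)$ is not block diagonal, exhibit a monomial of $f$ of degree $>\mu$. Fix an entry $a_{ij}:=(\Jac_f(0))_{ij}\ne0$ with $m_i\ne m_j$ and track it through $f=\sigma^{-1}\circ J_f\circ\sigma$, using the refined content of Lemma \ref{L:resonant}: the matrix $M(z)$, hence $N(z)$, is strictly lower block triangular, so $g_i$ involves only the variables $z_k$ with $m_k<m_i$, and $h_i$ only the $\zeta_k$ with $m_k<m_i$. Concretely, run an induction over the block index $p$. For $i$ in the lowest block $\mu_i=1$, so $\sigma_i=z_i$, $\tau_i=\zeta_i$ and $f_i=\sum_j a_{ij}\sigma_j$; then move upward, decomposing into $m$-homogeneous parts, and show that a ``misplaced'' homogeneous component carried by some $(J_f\circ\sigma)_k$ (for $k$ in a lower block) — which is present exactly because of an off-block entry of $\Jac_f(0)$ — is fed by the nonlinear substitution into a higher $h_i$ (whose monomials have $|\beta|\ge2$) and is thereby amplified to a monomial of $f_i$ of degree strictly greater than $\mu$.

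The step I expect to be the main obstacle is exactly this last one: ruling out that the relevant monomial cancels in the nonlinear substitution. One has to pin down a specific monomial whose coefficient provably does not vanish, and this is where the precise shape of $\sigma$ from Lemma \ref{L:resonant} is indispensable — the leading term $z_k$ of $\sigma_k$, the strict block-triangularity of $N(z)$, and the invertibility of $\Jac_f(0)$ forced by \eqref{E:TD} evaluated at the origin. Low-degree or degenerate configurations of $\Jac_f(0)$ need to be checked separately so that the induction does not stall. Once the necessity direction is in place, the corollary follows by combining it with the equivalences of the first paragraph, and applying the same analysis to every element of the compact isotropy group of $D$ at the origin gives Theorem \ref{T:main}.
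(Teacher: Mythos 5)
Your first paragraph and the ``if'' direction are fine: if each $f_i$ is an $i$-th resonant polynomial, every monomial $z^\alpha$ occurring in $f_i$ lies in $E_i$, so $|\alpha|\le\mu_i\le\mu$ and $\deg f\le\mu$ (this half needs only the definition of $\mu$, not Lemma \ref{L:linear}), and your chain of equivalences correctly reduces the remaining claim to ``$\deg f\le\mu\Rightarrow\Jac_f(0)$ is block diagonal.'' But the step you yourself flag as the main obstacle --- producing a monomial of degree $>\mu$ from an off-block entry of $\Jac_f(0)$ and showing it does not cancel --- is not a technical loose end; it is unfillable. Your mechanism requires feeding the misplaced term into a \emph{nonlinear} monomial of some $h_i$, and nothing in Lemma \ref{L:resonant} forces the relevant $h_i$ (equivalently $g_i$) to be nonzero. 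If $\sigma$ is the identity, Lemma \ref{L:linear} gives $f=J_f$, so $\deg f=1\le\mu$ regardless of $\Jac_f(0)$. This actually occurs: the unit ball has $\sigma_0(z)=z$ (since $T_D(z,0)\equiv(n+1)I_n$), it is quasi-circular of weight $(1,2)$, for which $\mu=2$, and $f(z)=\bigl((z_1+z_2)/\sqrt{2},\,(z_1-z_2)/\sqrt{2}\bigr)$ is an automorphism fixing the origin with $\deg f=1\le\mu$ whose first component is not a first resonant polynomial. So the implication ``$\deg f\le\mu\Rightarrow$ each $f_i$ is $i$-th resonant'' fails for this configuration, and the contrapositive you set out to prove is not a theorem without a further hypothesis guaranteeing that the resonant terms of $\sigma$ which an off-block entry would excite are actually present.

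For comparison, the paper supplies no argument for this corollary: it is offered as an immediate consequence of Lemma \ref{L:linear}, which it is only in the direction you completed. Your instinct to isolate the non-cancellation step as the crux was correct; the honest resolution is that only the ``if'' direction can be proved from Lemmas \ref{L:resonant}--\ref{L:linear} alone, and the ``only if'' direction would need either an added nondegeneracy assumption on $\sigma$ (or on $D$) or a quantifier over all automorphisms together with such an assumption, as in Theorem \ref{T:main}.
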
 

\begin{rmk}
Lemma \ref{L:linear} gives a complete description of all possible forms of automorphisms of quasi-circular domains fixing the origin. It also gives an alternative definition of the ``quasi-resonance order", which is easier to use and compute.
\end{rmk}

\end{document}